\newtheorem{thm}{Theorem}[section]
\newtheorem{lemma}[thm]{Lemma}
\newtheorem{prop}[thm]{Proposition}
\newtheorem{cor}[thm]{Corollary}
\theoremstyle{definition}
\newtheorem{nrmk}[thm]{Remark}
\theoremstyle{remark}
\newtheorem*{rmk}{Remark}
\newcommand{\Z}{\mathbb{Z}}
\newcommand{\curly}[1]{\mathcal{#1}}
\newcommand{\n}{\mathbb{N}}
\renewcommand{\S}{\curly{S}}
\newcommand{\la}{\curly{L}}
\renewcommand{\to}{\rightarrow}
\def \f{\mathbb F}
\def \<{\langle}
\def \>{\rangle}
\def \*Z {{{^*}\Z}}
\def \((  {(\!(}
\def \)) {)\!)}
\numberwithin{equation}{section}
\def \Th{\operatorname{Th}}
\def \R{\mathcal R}
\def \u{\mathcal U}
\def\indsym#1#2{%
  \setbox0=\hbox{$\m@th#1x$}%
  \kern\wd0%
  \hbox to 0pt{\hss$\m@th#1\mid$\hbox to 0pt{$\m@th#1^{#2}$}\hss}%
  \lower.9\ht0\hbox to 0pt{\hss$\m@th#1\smile$\hss}%
  \kern\wd0}
\newcommand{\ind}[1][]{\mathop{\mathpalette\indsym{#1}}}
\def\nindsym#1#2{%
  \setbox0=\hbox{$\m@th#1x$}%
  \kern\wd0%
  \hbox to 0pt{\hss$\m@th#1\not$\kern1.4\wd0\hss}
  \hbox to 0pt{\hss$\m@th#1\mid$\hbox to 0pt{$\m@th#1^{\,#2}$}\hss}%
  \lower.9\ht0\hbox to 0pt{\hss$\m@th#1\smile$\hss}%
  \kern\wd0}
\DeclareMathOperator{\Aut}{Aut}
\DeclareMathOperator{\id}{id}
\DeclareMathOperator{\SL}{SL}
\title[Model Companion of Tracial Von Neumann Algebras]{The theory of tracial von Neumann algebras does not have a model companion}
\author{Isaac Goldbring, Bradd Hart, Thomas Sinclair}
\thanks{Goldbring's work was partially supported by NSF grant DMS-1007144.}
\address {Department of Mathematics, Statistics, and Computer Science, University of Illinois at Chicago, Science and Engineering Offices M/C 249, 851 S. Morgan St., Chicago, IL, 60607-7045}
\email{isaac@math.uic.edu}
\urladdr{http://www.math.uic.edu/~isaac}
\address{Department of Mathematics and Statistics, McMaster University, 1280 Main Street W., Hamilton, Ontario, Canada L8S 4K1}
\email{hartb@mcmaster.ca}
\urladdr{http://www.math.mcmaster.ca/~bradd}
\address{Department of Mathematics, University of California, Los Angeles, 520 Portola Plaza, Box 951555, Los Angeles, CA, 90095-1555}
\email{thomas.sinclair@math.ucla.edu}
\urladdr{http://www.math.ucla.edu/~thomas.sinclair}
\begin{document}
\begin{abstract}
In this note, we show that the theory of tracial von Neumann algebras does not have a model companion.  This will follow from the fact that the theory of any locally universal, McDuff II$_1$ factor does not have quantifier elimination.  We also show how a positive solution to the Connes Embedding Problem implies that there can be no model-complete theory of II$_1$ factors.
\end{abstract}
\maketitle

\section{Introduction}

The model theoretic study of operator algebras is at a relatively young stage in its development (although many interesting results have already been proven, see \cite{FHS1},\cite{FHS2}, \cite{FHS3}) and thus there are many foundational questions that need to be answered.  In this note, we study the question that appears in the title:  does the theory of tracial von Neumann algebras have a model companion?  (Recall that a theory is said to be \emph{model-complete} if every embedding between models of the theory is elementary and a model-complete theory $T'$ is a \emph{model companion} of a theory $T$ if every model of $T$ embeds into a model of $T'$ and vice-versa.)  We show that the answer to this question is:  no!  Indeed, we prove that a locally universal, McDuff II$_1$ factor cannot have quantifier elimination.  (See below for the definitions of \emph{locally universal} and \emph{McDuff}.)  Since a model companion of the theory of tracial von Neumann algebras will have to be a model completion as well as the theory of a locally universal, McDuff II$_1$ factor, the result follows.  

We then pose a weaker question:  can there exist a model-complete theory of II$_1$ factors?  Here, we show that a positive solution to the \emph{Connes Embedding Problem} implies that the answer is once again:  no!

Another motivation for this work came from considering independence relations in II$_1$ factors.  Although all II$_1$ factors are unstable (see \cite{FHS1}), it is still possible that there are other reasonably well-behaved independence relations to consider.  Indeed, the independence relation stemming from conditional expectation is a natural candidate.  In the end of this note, we show how the failure of quantifier elimination seems to pose serious hurdles in showing that conditional expectation yields a strict independence relation in the sense of \cite{Adler}. 

We thank Dima Shlyakhtenko for patiently explaining Brown's work when we posed the question to him of the existence of non-extendable embeddings of pairs $\mathcal M\subset \mathcal N$ into $\R^\omega$.  (See the proof of Theorem \ref{noqeR} below.)

Throughout, $\la$ denotes the signature for tracial von Neumann algebras and $\R$ denotes the hyperfinite II$_1$ factor.  We recall that $\R$ embeds into any II$_1$ factor.  We will say that a von Neumann algebra is \emph{$\R^\omega$-embeddable} if it embeds into $\R^\u$ for some $\u\in \beta\n\setminus \n$.  If $M$ is $\R^\omega$ embeddable, then $M$ embeds into $\R^\u$ for \emph{all} $\u\in \beta\n\setminus \n$; see Corollary 4.15 of \cite{FHS2}.  For this reason, we fix $\u\in \beta\n\setminus \n$ throughout this note.

%\begin{df} Let $x = (x_1,\dots,x_n)$ be an $n$-tuple of self-adjoint elements in a finite von Neumann algebra $\cM$ with trace $\tau$. A \emph{microstate} $X = (X_1,\dotsc,X_n)\in \G(x, m, k, \e)$ is an $n$-tuple of self-adjoint matrices $X_i\in M_k$ such that for any $p\in [m]$, and any map $\sg: [p]\to [n]$ we have that \begin{equation}|\tr_k(X_{\sg(1)}\dotsb X_{\sg(p)}) - \tau(x_{\sg(1)}\dotsb x_{\sg(p)})|\leq \e.\end{equation}
%\end{df}

%The following result is a standard fact about microstates, we include a proof (soon!) for the sake of convenience.

%\begin{prop} Let $x = (x_1,\dotsc,x_n)$ be an $n$-tuple of self-adjoints which generate a finite-dimensional $\ast$-algebra $\cM$. For any $\de>0$ we can find $m$ sufficently large and $\e>0$ such that for any $X,Y\in \G(x,m,k,\e)$ there exists a unitary $U\in M_k$ such that $\|UX_iU^* - Y_i\|_2<\de$ for all $i\in [n]$. In particular, we can take $\im: \cM\to M_k$ a tracial embedding and $\im(x_i) = X_i$.
%\end{prop}

%\begin{cor} There is a unique embedding $\R\to \R^\u$ up to unitary conjugacy.
%\end{cor}

%\begin{cor}[CEP]
%Suppose that $M$ is a II$_1$ factor and fix an embedding $h:\R\to M$.  Then there is an embedding $h':M\to \R^\u$ such that $h'\circ h:\R\to \R^\u$ is the diagonal embedding.
%\end{cor}

\section{Model Companions}

In the proof of our first theorem, we use the crossed product construction for von Neumann algebras; a good reference is \cite[Chapter 4]{BO}.

\begin{thm}\label{noqeR}
$\Th(\R)$ does not have quantifier elimination.
\end{thm}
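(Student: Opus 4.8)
The plan is to exhibit two copies of $\R$ — the algebra $\R$ itself and an isomorphic copy sitting inside $\R^\u$ — together with a tuple witnessing that quantifier-free type does not determine type over $\Th(\R)$. Equivalently, since $\R$ embeds into everything and $\R^\u$ is a model of $\Th(\R)$ that is quite saturated, I would look for a pair of embeddings $\iota_1,\iota_2\colon\M\hookrightarrow\R^\u$ of some fixed tracial von Neumann algebra $\M$ that have the same quantifier-free type but are not conjugate by an automorphism of $\R^\u$; by $\aleph_1$-saturation of $\R^\u$ (or rather of a suitable ultrapower), non-conjugacy of embeddings with the same quantifier-free type is exactly a failure of quantifier elimination. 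The natural source of such pairs, as the acknowledgement to Shlyakhtenko signals, is Nate Brown's work on the space $\mathbb{H}om(\M,\R^\u)$ of embeddings modulo unitary conjugacy: Brown showed this space has rich convex-like structure, and in particular that there exist embeddings of certain $\M$ (e.g.\ $\M$ with property (T), or $\M=L(\mathbb F_2)$ assuming it is $\R^\omega$-embeddable, or even $\M=\R\bar\otimes\R$) that are genuinely non-conjugate.

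The key steps, in order. \emph{Step 1:} Fix a tracial von Neumann algebra $\M$, embeddable in $\R^\u$, admitting two embeddings $\pi_0,\pi_1\colon\M\to\R^\u$ that are not unitarily conjugate in $\R^\u$ — more robustly, not conjugate by any automorphism of $\R^\u$, and ideally not even conjugate after further ultrapower. \emph{Step 2:} Pick a finite generating tuple (or a suitable finite tuple) $\bar a$ from $\M$ and consider $\bar b_0=\pi_0(\bar a)$ and $\bar b_1=\pi_1(\bar a)$ in $\R^\u$; since $\pi_0,\pi_1$ are $*$-homomorphisms out of the same $\M$, the tuples $\bar b_0$ and $\bar b_1$ satisfy the same quantifier-free formulas — they have the same quantifier-free type. \emph{Step 3:} Argue that $\bar b_0$ and $\bar b_1$ do \emph{not} have the same full type in $\R^\u$: if they did, by homogeneity/saturation of $\R^\u$ (here one may need to pass to a further ultrapower $(\R^\u)^{\u}\cong\R^{\u}$ to get enough saturation and the requisite back-and-forth) there would be an automorphism of $\R^\u$ carrying $\bar b_0$ to $\bar b_1$, hence carrying $\pi_0(\M)$ to $\pi_1(\M)$ compatibly with $\bar a$, i.e.\ conjugating the two embeddings — contradicting Step 1. \emph{Step 4:} Two tuples with the same quantifier-free type but different types in a model of $\Th(\R)$ witness directly that $\Th(\R)$ does not eliminate quantifiers.

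The main obstacle is Step 3, and specifically the interface between the operator-algebraic statement ``the two embeddings are non-conjugate'' and the model-theoretic statement ``the two tuples have different types.'' Having the \emph{same type} in $\R^\u$ gives an automorphism of an elementary extension moving one tuple to the other, but to get an automorphism of $\R^\u$ itself — or at least a conjugacy of embeddings in some $\R^\omega$-type object — one needs the homogeneity of the ultrapower, which in the continuous setting requires care about saturation degree (countable saturation of $\R^\u$ versus the possible uncountability of a generating set of $\M$) and may force one to work with $\M$ separable and to conjugate inside a further ultrapower; one then needs Brown's non-conjugacy to survive that further ultrapower, which is why choosing the right $\M$ (one where non-conjugacy is detected by a robust invariant, not an accident of the particular $\R^\u$) is essential. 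Packaging Brown's theory to extract exactly such an $\M$ with exactly the right permanence is the real content; everything else is soft model theory. A secondary point to handle cleanly: ``same quantifier-free type'' for tuples from the domains of two $*$-homomorphisms out of a common algebra is immediate, but one should make sure the tuple chosen actually generates (or at least that the relevant invariant distinguishing the embeddings is type-definable from the tuple) so that a type-equality would genuinely yield a conjugacy of the embeddings and not merely of the subalgebras they generate.
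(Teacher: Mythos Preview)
Your diagnosis of the obstacle is accurate, but you do not actually close the gap you identify, and the paper's proof closes it by a different mechanism that you are missing. The difficulty with your Step~3 is twofold. First, $\R^\u$ is only $\aleph_1$-saturated, which does not suffice to turn ``same type'' into ``automorphic''; you propose passing to a further ultrapower, but $(\R^\u)^\u$ is not $\R^\u$, and you would then need Brown's non-conjugacy to persist there, which you do not verify. Second, and more seriously, Brown's results concern \emph{unitary} conjugacy, whereas your argument requires non-conjugacy by arbitrary automorphisms of (an ultrapower of) $\R^\u$; these are genuinely different, and you give no argument that Brown's invariants are automorphism-invariant rather than merely inner-automorphism-invariant. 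So as written the proposal is a plausible outline with its hardest step left as an exercise.

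The paper sidesteps both issues by using the \emph{extension} criterion for QE rather than the ``same type'' criterion. Concretely: take $\M=L(\Gamma)$ with $\Gamma=\SL(3,\mathbb Z)\ast\mathbb Z$, and take $\alpha\in\Aut(\M)$ and $\pi:\M\to\R^\u$ such that $\pi$ and $\pi\circ\alpha$ are not unitarily conjugate in $\R^\u$ (this is exactly Corollary~6.11 of Brown). Now form the crossed product $\mathcal N=\M\rtimes_\alpha\mathbb Z$, which is $\R^\omega$-embeddable since $\mathbb Z$ is amenable. If $\pi$ extended to $\widetilde\pi:\mathcal N\to\R^\u$, then $\widetilde\pi$ applied to the canonical unitary $u\in\mathcal N$ implementing $\alpha$ would give a unitary in $\R^\u$ conjugating $\pi$ to $\pi\circ\alpha$ --- contradiction. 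Thus $\pi$ does not extend to $\mathcal N$, hence not to any separable $N_1\models\Th(\R)$ containing $\mathcal N$; since $\R^\u$ is $\aleph_1$-saturated, this is precisely the failure of QE.

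The point is that the crossed product converts Brown's \emph{unitary} non-conjugacy statement directly into a non-extension statement, so one never needs homogeneity, never needs automorphisms of $\R^\u$, and never needs to check that any invariant persists under further ultrapowers. Your ``robust invariant'' desideratum is replaced by a single existential formula (existence of the implementing unitary), and the saturation you already have is enough. This crossed-product step is the missing idea in your proposal.
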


\begin{proof}

It is enough to find separable, $\R^\omega$-embeddable tracial von Neumann algebras $M\subset N$ and an embedding $\pi:M\to \R^\u$ that does not extend to an embedding $N\to \R^\u$.  Indeed, if this is so, let $N_1$ be a separable model of $\Th(\R)$ containing $N$.  Then $\pi$ does not extend to an embedding $N_1\to \R^\u$; since $\R^\u$ is $\aleph_1$-saturated, this shows that $\Th(\R)$ does not have QE.

In order to achieve the goal of the above paragraph, we claim that it is enough to find a countable discrete group $\Gamma$ such that $L(\Gamma)$ is $\R^\omega$-embeddable, an embedding $\pi:L(\Gamma)\to \R^\u$, and $\alpha\in \Aut(L(\Gamma))$ such that there exists no unitary $u\in \R^\u$ satisfying $(\pi\circ \alpha)(x) = u\pi(x)u^*$ for all $x\in L(\Gamma)$. (We should remark that we are using the usual trace on $L(\Gamma)$ and that $\Aut(L(\Gamma))$ refers to the group of $*$-automorphisms preserving this trace.)
%First, recall that if $\mathcal M$ is a finite von Neumann algebra with faithful trace $\tau$ and $\Aut(\mathcal M)$ denotes the group of $\tau$-preserving $\ast$-automorphisms of $\mathcal M$, then given any homomorphism of a countable discrete group $\alpha: G\to \Aut(\mathcal M)$, one can form the crossed product von Neumann algebra $\mathcal M\rtimes_\alpha G$, which is again finite \cite{}.  
First, we abuse notation and also use $\alpha$ to denote the homomorphism $\mathbb Z\to \Aut(L(\Gamma))$ which sends the generator of $\mathbb Z$ to the aforementioned $\alpha$.  Set $\mathcal M = L(\Gamma)$ and $\mathcal N = \mathcal M\rtimes_\alpha \mathbb Z$.  Then $N$ is a tracial von Neumann algebra.  Moreover, we have that $\mathcal N$ is $\R^\omega$-embeddable if and only if $\mathcal M$ is---in fact, this is true for any crossed product algebra $\mathcal M\rtimes_\alpha G$ where $G$ is amenable \cite[Prop. 3.4(2)]{A}.  Now suppose, towards a contradiction, that $\pi$ were to extend to an embedding $\widetilde\pi: \mathcal N\to \R^\u$. If $u\in L(\mathbb Z)\subset \mathcal M\rtimes_\alpha \mathbb Z$ is the generator of $\mathbb Z$, then setting $\tilde u = \widetilde\pi(u)\in \R^\u$, we would have that $\tilde u\pi(x)\tilde u^* = \pi(uxu^*) = \pi(\alpha(x))$ for all $x\in \mathcal M$, contradicting the fact that $\pi\circ\alpha$ is not unitarily conjugate to the embedding $\pi$ in $\R^\u$.

An explicit construction of $\Gamma$, $\pi$ and $\alpha$ as above has already appeared in the work of N.\ P.\ Brown \cite{Brown}. Indeed, by Corollary 6.11 of \cite{Brown}, we may choose $\Gamma = \SL(3,\mathbb Z) \ast \mathbb Z$ and $\alpha = \id\ast\theta$ for any nontrivial $\theta\in \Aut(L(\mathbb Z))$.

\end{proof}

We say that a separable II$_1$ factor $\S$ is \emph{locally universal} if every separable II$_1$ factor embeds into $\S^\u$.  (By \cite[Corollary 4.15]{FHS2}, this notion is independent of $\u$.)  In \cite{FHS3}, it is shown that a locally universal II$_1$ factor exists.  The \emph{Connes Embedding Problems} (CEP) asks whether $\R$ is locally universal.

We say that a separable II$_1$ factor $M$ is \emph{McDuff} if $M\otimes \R\cong M$.  For example, $\R$ is McDuff as is $M\otimes \R$ for any separable II$_1$ factor $M$. By examining Brown's argument in \cite{Brown}, we see that the only properties of $\R$ that are used (other than it being finite) is that $L(\Gamma)$ (for $\Gamma$ as in the previous proof) is $\R^\omega$-embeddable and that $\R$ is McDuff.  We thus have:

\begin{cor}\label{noqe}
If $\S$ is a locally universal, McDuff II$_1$ factor, then $\Th(\S)$ does not have QE.
\end{cor}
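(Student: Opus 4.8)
The plan is to run the argument of Theorem \ref{noqeR} with $\S$ in the role of $\R$, so that the only real task is to verify that each step survives the substitution. The opening reduction is purely formal: it suffices to produce separable tracial von Neumann algebras $M\subset N$, both embeddable into $\S^\u$, together with an embedding $\pi:M\to\S^\u$ that does not extend to an embedding $N\to\S^\u$. Granting this, pick a separable model $N_1$ of $\Th(\S)$ with $N\subseteq N_1$; then $\pi$ does not extend to $N_1\to\S^\u$ either, and since $\S^\u$ is $\aleph_1$-saturated this shows $\Th(\S)$ fails quantifier elimination.

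Next I would reproduce the crossed-product reduction. It is enough to find a countable discrete group $\Gamma$ with $L(\Gamma)$ embeddable into $\S^\u$, an embedding $\pi:L(\Gamma)\to\S^\u$, and a trace-preserving $\alpha\in\Aut(L(\Gamma))$ such that no unitary $u\in\S^\u$ satisfies $(\pi\circ\alpha)(x)=u\pi(x)u^*$ for all $x\in L(\Gamma)$. Setting $\mathcal{M}=L(\Gamma)$ and $\mathcal{N}=\mathcal{M}\rtimes_\alpha\mathbb{Z}$, the algebra $\mathcal{N}$ is a separable tracial von Neumann algebra that is $\R^\omega$-embeddable if and only if $\mathcal{M}$ is, by \cite[Prop.~3.4(2)]{A} and amenability of $\mathbb{Z}$; and since $\R$ embeds into $\S$, any $\R^\omega$-embeddable algebra embeds into $\R^\u$ and hence into $\S^\u$. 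The verification that an extension $\widetilde\pi:\mathcal{N}\to\S^\u$ of $\pi$ would conjugate $\pi$ to $\pi\circ\alpha$ through the unitary $\widetilde\pi(u)$ is the same computation as in Theorem \ref{noqeR} and uses nothing about the target beyond its being a tracial von Neumann algebra.

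It remains to supply $\Gamma$, $\pi$, and $\alpha$, and for this I would take Brown's data $\Gamma=\SL(3,\mathbb{Z})\ast\mathbb{Z}$ and $\alpha=\id\ast\theta$ with $\theta\in\Aut(L(\mathbb{Z}))$ nontrivial, exactly as in Theorem \ref{noqeR}. Two points require checking for this choice. First, $L(\Gamma)$ embeds into $\S^\u$: indeed $L(\Gamma)$ is $\R^\omega$-embeddable, hence embeds into $\R^\u$ and so into $\S^\u$; alternatively, $L(\Gamma)$ is a separable II$_1$ factor (as $\Gamma$ is i.c.c.) and $\S$ is locally universal. Second, $\S$ is McDuff, which is a hypothesis. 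The main obstacle, and the real content of the corollary, is the point flagged in the paragraph before the statement: one must revisit the proof of \cite[Corollary~6.11]{Brown} and confirm that the failure of unitary conjugacy between $\pi$ and $\pi\circ\alpha$ uses no feature of $\R$ other than that it is a finite, McDuff II$_1$ factor into whose ultrapower $L(\Gamma)$ embeds---in particular, that the argument never secretly invokes hyperfiniteness, amenability, or the uniqueness of $\R$. Once this audit is carried out, Brown's construction yields an embedding $\pi:L(\Gamma)\to\S^\u$ for which $\pi\circ\alpha$ is not unitarily conjugate to $\pi$ in $\S^\u$, and the corollary follows.
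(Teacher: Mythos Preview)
Your proposal is correct and follows exactly the paper's approach: the paper simply observes, in the paragraph preceding the corollary, that an inspection of Brown's argument shows the only properties of $\R$ used in Theorem~\ref{noqeR} are that $L(\Gamma)$ embeds into the ultrapower and that the target factor is McDuff, so the proof carries over verbatim to any locally universal McDuff $\S$. You have spelled out this inspection in more detail than the paper does, but the strategy and the key ingredients are identical.
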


Let $T_0$ be the theory of tracial von Neumann algebras in the signature $\la$.  $T_0$ is a universal theory; see \cite{FHS2}.  Let $T$ be the theory of II$_1$ factors, a $\forall \exists$-theory by \cite{FHS2}.  Moreover, since every tracial von Neumann algebra is contained in a II$_1$ factor, we see that $T_0=T_\forall$.  Thus, an existentially closed model of $T_0$ is a model of $T$.

By \cite[Proposition 3.9]{FHS3}, there is a set $\Sigma$ of $\forall \exists$-sentences in the language of tracial von Neumann algebras such that $M$ is McDuff if and only if $M\models \Sigma$.  Since every II$_1$ factor is contained in a McDuff II$_1$ factor (as $M\subseteq M\otimes \R)$, it follows that an existentially closed II$_1$ factor is McDuff.

We can now prove our main result:

\begin{thm}\label{nomodcomp}
$T_0$ does not have a model companion.
\end{thm}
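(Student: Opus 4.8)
The plan is to derive a contradiction from the hypothetical existence of a model companion $T'$ of $T_0$, by showing that $T'$ would have to coincide with the theory of a particular well-understood $\mathrm{II}_1$ factor to which Corollary \ref{noqe} applies. First I would recall the general model-theoretic fact that when a universal theory $T_0$ has a model companion $T'$, then $T'$ is exactly the theory of the existentially closed models of $T_0$, and moreover (since $T_0$ is universal) $T'$ is a \emph{model completion}, hence has quantifier elimination. So it suffices to exhibit a single existentially closed model of $T_0$ which is a locally universal, McDuff $\mathrm{II}_1$ factor: such a factor is a model of $T'$, QE for $T'$ would give QE for its complete theory, and this contradicts Corollary \ref{noqe}.

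Next I would assemble the pieces already laid out in the excerpt to see that existentially closed models of $T_0$ have all the required properties. As observed just before the statement, $T_0 = T_\forall$ where $T$ is the $\forall\exists$-theory of $\mathrm{II}_1$ factors, so every existentially closed model of $T_0$ is a $\mathrm{II}_1$ factor; and by the remark citing \cite[Proposition 3.9]{FHS3} together with $M \subseteq M \otimes \R$, every existentially closed $\mathrm{II}_1$ factor is McDuff. The one genuinely new ingredient needed is local universality: I would argue that any existentially closed model $\S$ of $T_0$ is locally universal. The idea is that, given a separable $\mathrm{II}_1$ factor $M$, one wants $M \hookrightarrow \S^\u$. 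Take any separable elementary substructure $\S_0 \preceq \S$ that is still existentially closed (using a downward Löwenheim–Skolem argument, or simply work with a separable e.c. model from the start, whose existence follows from the usual chain construction). Embed both $M$ and $\S_0$ into a common separable $\mathrm{II}_1$ factor $P$ — e.g. a free product, or $M * \S_0$ amalgamated suitably — using that every tracial von Neumann algebra sits inside a $\mathrm{II}_1$ factor. Since $\S_0$ is existentially closed in $T_0$ and $P$ is a model of $T_0$ extending $\S_0$, the inclusion $\S_0 \hookrightarrow P$ reflects existential formulas, and a standard compactness/saturation argument then yields an embedding $P \hookrightarrow \S_0^\u$ over $\S_0$; composing gives $M \hookrightarrow \S_0^\u \subseteq \S^\u$. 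Thus $\S$ is locally universal.

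With this, the argument closes: let $\S$ be a separable existentially closed model of $T_0$. Such models exist by the standard union-of-chains construction (the class of models of a universal $\forall\exists$-type theory with the amalgamation-style closure properties of tracial von Neumann algebras always has e.c. models). By the previous paragraph $\S$ is locally universal, by the McDuff remark $\S$ is McDuff, and by the reduction at the top $\S$ is a $\mathrm{II}_1$ factor. If $T_0$ had a model companion $T'$, then because $T_0$ is universal, $T'$ would be a model completion and in particular would have quantifier elimination; since $\S$ is an existentially closed model of $T_0$, we have $\S \models T'$, so $\Th(\S) \supseteq T'$ and $\Th(\S)$ would admit QE. This contradicts Corollary \ref{noqe}. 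Hence $T_0$ has no model companion.

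The step I expect to be the main obstacle is the verification that existentially closed models of $T_0$ are locally universal — specifically, pushing through the amalgamation-plus-saturation argument cleanly in the continuous/metric setting (making sure the free-product or amalgamated construction of $P$ really is a tracial von Neumann algebra containing both $M$ and $\S_0$, and that the e.c. property transfers an embedding into $P$ into one into $\S_0^\u$). The remaining steps are bookkeeping: existence of separable e.c. models is the standard chain argument, McDuffness of e.c. factors is quoted, and "model companion of a universal theory $\Rightarrow$ model completion $\Rightarrow$ QE" is classical and needs only to be invoked.
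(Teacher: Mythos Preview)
Your proof is correct and follows the same strategy as the paper. One correction: the step ``$T_0$ universal $\Rightarrow$ a model companion is automatically a model completion (hence has QE)'' is false as stated---you need the amalgamation property for $T_0$, which does hold here and which the paper invokes explicitly, citing \cite[Chapter~4]{BO}. For local universality the paper takes a slightly shorter path than your free-product construction: given a separable $M$, the model-companion property directly yields a separable $\S_1\models T'$ containing $M$, and then $\S_1$ embeds into $\S^\u$ by $\aleph_1$-saturation (QE together with the common scalar subalgebra makes $T'$ complete, so $\S_1\equiv\S$).
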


\begin{proof}
Suppose that $T$ is a model companion for $T_0$.  Since $T_0$ is univerally axiomatizable and has the amalgamation property (see \cite[Chapter 4]{BO}), we have that $T$ has QE.

Fix a separable model $\S$ of $T$.  Then $\S$ is a locally universal II$_1$ factor.  Indeed, given an arbitrary separable II$_1$ factor $M$, we have a separable model $\S_1\models T$ containing $M$.  Since $\S^\u$ is $\aleph_1$-saturated, we have that $\S_1$ embeds into $\S^\u$, yielding an embedding of $M$ into $\S^\u$.  Meanwhile, since $T$ is the theory of existentially closed models of $T_0$, we see that $\S$ is McDuff.  Thus, by Corollary \ref{noqe}, $T$ does not have QE, a contradiction.
%Let $M\models T$.  Since $M$ is existentially closed, we have that $M$ is a II$_1$ factor.  By Proposition \ref{R}, $M\equiv \R$, whence $T^*=\Th(\R)$.  On the other hand, $T_0$ has the amalgamation property (see \cite[Chapter 4]{BO}), so $T^*$ is the model-completion of $T_0$.  

\end{proof}

\section{Model Complete II$_1$ Factors}

While we have proven that the theory of tracial von Neumann algebras does not have a model companion, at this point it is still possible that there is a model complete theory of II$_1$ factors.  In this section, we show that a positive solution to the CEP implies that there is no model-complete theory of II$_1$ factors.

We begin by observing the following:

\begin{lemma}\label{elem}
Every embedding $\R\to \R^\omega$ is elementary.
\end{lemma}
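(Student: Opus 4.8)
The plan is to exploit the homogeneity of the hyperfinite II$_1$ factor together with the model-theoretic characterization of elementarity via automorphisms of an ultrapower. First I would recall the standard fact (going back to a theorem of Dixmier, or derivable from the uniqueness of the trace-preserving embedding of $\R$ into $\R$) that any two embeddings of $\R$ into $\R^\u$ are unitarily conjugate: if $\pi_1, \pi_2 : \R \to \R^\u$ are embeddings, then there is a unitary $v \in \R^\u$ with $\pi_2 = \operatorname{Ad}(v) \circ \pi_1$. This is essentially because $\R$ is the unique separable hyperfinite II$_1$ factor, so the images $\pi_1(\R)$ and $\pi_2(\R)$ generate the same copy of $\R$ up to inner automorphism of $\R^\u$ — more carefully, one uses that $\R^\u$ is a II$_1$ factor containing two hyperfinite subfactors with the same standard model, and that finite-dimensional approximations can be matched up inside $\R^\u$ by saturation.

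Next I would fix the canonical (diagonal) embedding $\iota : \R \to \R^\u$, which is obviously elementary since $\R \equiv \R^\u$ and $\iota$ identifies $\R$ with the diagonal copy inside its own ultrapower — indeed $\operatorname{tp}^{\R^\u}(\iota(a_1),\dots,\iota(a_n)) = \operatorname{tp}^{\R}(a_1,\dots,a_n)$ because any formula evaluated on a diagonal tuple in the ultrapower agrees with its value in $\R$ (this is just Łoś's theorem for the constant sequences). Now given an arbitrary embedding $\pi : \R \to \R^\u$, by the previous paragraph there is a unitary $v \in \R^\u$ with $\pi = \operatorname{Ad}(v)\circ \iota$. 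Since $\operatorname{Ad}(v)$ is an automorphism of $\R^\u$, it preserves all formulas, and therefore for every formula $\varphi$ and tuple $\bar a$ from $\R$ we get $\varphi^{\R^\u}(\pi(\bar a)) = \varphi^{\R^\u}(\operatorname{Ad}(v)(\iota(\bar a))) = \varphi^{\R^\u}(\iota(\bar a)) = \varphi^{\R}(\bar a)$, which is precisely the statement that $\pi$ is elementary.

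The main obstacle is the first step: establishing that any embedding of $\R$ into $\R^\u$ is unitarily conjugate to the diagonal one. This is where the special structure of the hyperfinite factor is genuinely used — it fails badly for general II$_1$ factors (that failure is exactly the content of Theorem \ref{noqeR}). I would handle it by the usual back-and-forth/intertwining argument at the level of the generating chain of matrix subalgebras $M_{2^n}(\mathbb{C})$ of $\R$: given $\pi$, use the fact that $\R^\u$ is a McDuff II$_1$ factor (indeed $\aleph_1$-saturated) to recursively build unitaries in $\R^\u$ conjugating the finite-dimensional pieces of $\pi(\R)$ onto those of $\iota(\R)$, taking a suitable limit. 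Alternatively, and more cleanly, one can cite that all embeddings of a fixed separable II$_1$ factor into $\R^\u$ that are "approximately inner-conjugate" are unitarily conjugate, which for $\R$ is automatic. Once that conjugacy is in hand, the rest is the soft automorphism argument above.
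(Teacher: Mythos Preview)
Your approach is correct and coincides with the paper's: the paper's proof is the single sentence that every embedding $\R\to\R^\omega$ is unitarily equivalent to the diagonal embedding (citing Jung \cite{Jung}), from which elementarity follows exactly as you argue via $\operatorname{Ad}(v)$ preserving formulas and \L o\'s for the diagonal. The only difference is packaging: where you sketch the back-and-forth over the matrix filtration (and loosely attribute the conjugacy to Dixmier-type uniqueness), the paper simply invokes Jung's theorem as a black box.
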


\begin{proof}
This follows from the fact that every embedding $\R\to \R^\omega$ is unitarily equivalent to the diagonal embedding; see \cite{Jung}.
\end{proof}

\begin{rmk}
The previous lemma shows that $\R$ is the unique prime model of its theory.  Indeed, to show that $\R$ is a prime model of its theory, by Downward L\"owenheim-Skolem (DLS), it is enough to show that whenever $M\equiv \R$ is separable, then $\R$ elementarily embeds into $M$.  Well, since $\R^\u$ is $\aleph_1$-saturated, we have that $M$ elementarily embeds into $\R^\u$.  Composing an embedding $\R\to M$ with the elementary embedding $M\to \R^\u$ and applying Lemma \ref{elem}, we see that the embedding $\R\to M$ is elementary.  
\end{rmk}

\begin{prop}\label{R}
Suppose that $M$ is an $\R^\omega$-embeddable II$_1$ factor such that $\Th(M)$ is model-complete.  Then $M\equiv \R$.
\end{prop}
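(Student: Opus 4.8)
The plan is to exploit the fact that $M$ is $\R^\omega$-embeddable together with the hypothesis that $\Th(M)$ is model-complete in order to set up a chain of embeddings that sandwiches $M$ and $\R$ elementarily. First I would fix an embedding $\iota: M \to \R^\u$, which exists by hypothesis. Since $\R$ embeds into every $\mathrm{II}_1$ factor, I also have an embedding $\R \to M$, and composing with $\iota$ gives an embedding $\R \to \R^\u$; by Lemma \ref{elem} this composite is elementary, so in particular $\R \equiv \R^\u$, and moreover $\R \preceq \R^\u$ via this map.

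The key step is to also produce an embedding going the other way, $\R^\u \to (\text{something}) $, or rather to embed $M$ into an ultrapower of $\R$ in a way compatible with model-completeness. Concretely: since $\Th(M)$ is model-complete, \emph{every} embedding between models of $\Th(M)$ is elementary. Now consider $M \to \R^\u$. The target $\R^\u$ need not be a model of $\Th(M)$ a priori, so I would instead work as follows. Take $M \preceq M^\u$ (the diagonal embedding into the ultrapower of $M$ is elementary), and observe that $\R^\u \preceq$-embeds or embeds appropriately; the clean route is: $\R \hookrightarrow M \hookrightarrow \R^\u$, and by model-completeness of $\Th(M)$ applied to the embedding $M \hookrightarrow$ (a model of $\Th(M)$ containing $\R^\u$ — obtained since $\R^\u$ embeds in some $M$-model by saturation if $M$ is locally universal; but we cannot assume that).

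Let me give the argument I actually expect to work. Embed $M \to \R^\u$. Since $\R^\u$ is a $\mathrm{II}_1$ factor and $\Th(M)$ is model-complete, consider a separable elementary substructure; the real content is that model-completeness forces $M \to \R^\u$ to be "as elementary as possible". The decisive move: by model-completeness of $\Th(M)$, $M$ is existentially closed among models of $\Th(M)_\forall$, hence among $\mathrm{II}_1$ factors containing it with the same universal theory; combined with $\R \hookrightarrow M \hookrightarrow \R^\u$ and $\R \preceq \R^\u$ (Lemma \ref{elem}), one gets that the existential type of $M$ over $\R$ is realized inside $\R^\u$, forcing $\Th_\exists(M) \subseteq \Th_\exists(\R^\u) = \Th_\exists(\R)$. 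Running the symmetric inclusion $\R \hookrightarrow M$ with model-completeness of $\Th(M)$ — so that $\R \hookrightarrow M$ is elementary once we know $\R \models \Th(M)_\forall$, i.e. $\Th_\forall(M) \subseteq \Th_\forall(\R)$, which follows from $M \hookrightarrow \R^\u \equiv \R$ — yields $\R \preceq M$, whence $M \equiv \R$.

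The main obstacle is bookkeeping the direction of model-completeness correctly: model-completeness of $\Th(M)$ only tells us about embeddings between models of $\Th(M)$, and $\R$ is not yet known to be such a model, so the argument must first establish $\Th_\forall(M) = \Th_\forall(\R)$ (using $\R \hookrightarrow M \hookrightarrow \R^\u \equiv \R$ for one inclusion, and that $\R$ embeds in $M$ for... actually both inclusions come from these two embeddings since universal sentences pass down to substructures and $\R, M$ embed into each other's relevant ultrapowers). Once $\R \models \Th_\forall(M)$, model-completeness upgrades the embedding $\R \hookrightarrow M$ to an elementary one, giving $M \equiv \R$. I expect the only subtlety is checking that "$\Th(M)$ model-complete" plus "$\R$ satisfies the universal part of $\Th(M)$" genuinely licenses applying model-completeness to $\R \hookrightarrow M$ — this uses the standard fact that a model-complete theory's models are exactly the e.c. models of its universal part, so any model of the universal part embeds elementarily once it embeds at all.
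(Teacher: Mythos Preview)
Your argument has a genuine gap in the final step. You correctly establish that $\Th_\forall(M)=\Th_\forall(\R)$ from the sandwich $\R\hookrightarrow M\hookrightarrow\R^\u$ together with Lemma~\ref{elem}. But the inference you then draw is not valid: from ``$\Th(M)$ is model-complete'' and ``$\R\models\Th_\forall(M)$'' it does \emph{not} follow that the embedding $\R\hookrightarrow M$ is elementary. Model-completeness only guarantees that embeddings between \emph{models of $\Th(M)$} are elementary; it says nothing about embeddings from an arbitrary model of $\Th_\forall(M)$ into a model of $\Th(M)$. Your appeal to the fact that the models of a model-complete theory are exactly the e.c.\ models of its universal part is correct as a fact, but it does not yield your conclusion: that fact tells you $M$ is e.c.\ in its extensions, not that $\R$ is. (For a classical counterexample, take $T=\mathrm{ACF}_0$: the embedding $\mathbb{Q}\hookrightarrow\overline{\mathbb{Q}}$ is not elementary, even though $\mathbb{Q}\models T_\forall$.)

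What you would actually need is that $\R$ itself is an e.c.\ model of $\Th_\forall(M)$, i.e.\ that $\R$ is existentially closed in \emph{every} separable extension $N\supseteq\R$ with $N\models\Th_\forall(M)$, not just in the particular $M$ you started with. This can in fact be pushed through with another use of Lemma~\ref{elem} (any such $N$ has $\Th_\forall(N)=\Th_\forall(\R)$, hence is $\R^\omega$-embeddable, and then the composite $\R\to N\to\R^\u$ is elementary), but you did not supply this step, and the sentence ``any model of the universal part embeds elementarily once it embeds at all'' is simply false. The paper instead builds an alternating chain $\R\to M\to\R_1\to M_1\to\cdots$ with each $\R_n\preceq\R^\u$ and each $M_n\preceq M^\u$, and argues that both $\R$ and $M$ sit elementarily inside the union---the $M$-side by model-completeness, the $\R$-side by repeated appeals to Lemma~\ref{elem}. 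Your universal-theory idea is more direct in spirit, but as written it stops one crucial step short.
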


\begin{proof}
Without loss of generality, we may assume that $M$ is separable.  Fix embeddings $\R\to M$ and $M\to \R^\u$.  By Lemma \ref{elem}, the composition $$\R\to M\to \R^\u$$ is elementary.  By DLS, we can take a separable elementary substructure $\R_1$ of $\R^\u$ such that $M$ embeds in $\R_1$; observe that the composition $\R\to M\to \R_1$ is elementary.  By DLS again, take a separable elementary substructure $M_1$ of $M^\u$ such that $\R_1$ embeds in $M_1$.  We now repeat this process with $M_1$:  embed $M_1$ in $\R^{\u}$, take separable elementary substructure $\R_2$ of $\R^{\u}$ such that $M_1$ embeds in $\R_2$ and then embed $\R_2$ in a separable elementary substructure $M_2$ of $M^{\u}$.  Iterate this construction countably many times, obtaining
$$\R\to M\to \R_1\to M_1\to \R_2\to M_2\to \cdots,$$ where each $\R_n$ is a separable elementary substructure of $\R^\u$ and each $M_i$ is a separable elementary substructure of $M^\u$. 
Set $\R_\omega=\bigcup_n \R_n=\bigcup_n M_n$.  Then $\R$ is an elementary substructure of $\R_\omega$ since  $\R\to \R_1$ is elementary and $\R_n\to \R_{n+1}$ is elementary for each $n\geq 1$.
%(either because we assume $\Th(\R)$ is model-complete or we assume that the embeddings are all the diagonal embeddings).  
Meanwhile, observe that $M_n\equiv M$ for each $n$, so by model-completeness of $\Th(M)$, we have that the $M_n$'s form an elementary chain, whence $M$ is an elementary substructure of $\R_\omega$.  Consequently, $\R\equiv M$.  
%(Another possibility to fix this is to assume that $\Th(\R)$ is $\forall \exists$-axiomatizable, for then $\R_\omega\equiv \R$, which is all that we needed.)
\end{proof}

\begin{nrmk}
Proposition \ref{R} provides immediate examples of non-model complete theories of II$_1$ factors.  Indeed, for $m\geq 2$, the von Neumann group algebra of the free group on $m$ generators, $L(\f_m)$, is $\R^\omega$-embeddable but not elementarily equivalent to $\R$ (see 3.2.2 in \cite{FHS3}), whence $\Th(L(\f_m))$ is not model-complete.  It is an outstanding problem in operator algebras whether or not $L(\f_m)\cong L(\f_n)$ for all $m,n\geq 2$.  A weaker, but still seemingly difficult, question is whether or not $L(\f_m)\equiv L(\f_n)$ for all $m,n\geq 2$.  (An equivalent formulation of this question is whether or not there is $\u\in \beta\n\setminus \n$ such that $L(\f_m)^\u\cong L(\f_n)^\u)$?)  Suppose this latter question has an affirmative answer.  Then we see that the theory of free group von Neumann algebras is not model-complete, mirroring the corresponding fact that the theory of free groups is not model-complete.  However, the natural embeddings $\f_m\to \f_n$, for $m< n$, are elementary.  \emph{Assuming $L(\f_m)\equiv L(\f_n)$, are the natural embeddings $L(\f_m)\to L(\f_n)$, for $m<n$, elementary?}
\end{nrmk}

\begin{cor}
Assume that the CEP has a positive solution.  Then there is no model-complete theory of II$_1$ factors.
\end{cor}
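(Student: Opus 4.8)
The plan is to argue by contradiction. Suppose that $T'$ is a model-complete theory of $\mathrm{II}_1$ factors, i.e.\ $T' = \Th(M)$ for some $\mathrm{II}_1$ factor $M$ with $\Th(M)$ model-complete. By Downward L\"owenheim--Skolem we may pass to a separable elementary substructure of $M$, so we may assume $M$ is separable (it is still a $\mathrm{II}_1$ factor with model-complete theory). Since the CEP has a positive solution, $\R$ is locally universal, so $M$ embeds into $\R^\u$; that is, $M$ is $\R^\omega$-embeddable. Proposition \ref{R} then gives $M \equiv \R$, so $\Th(\R)$ itself is model-complete.

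The heart of the argument is then to show that, under the CEP, $\Th(\R)$ is a \emph{model companion} of $T_0$, directly contradicting Theorem \ref{nomodcomp}. Model-completeness of $\Th(\R)$ is already in hand, and every model of $\Th(\R)$ is a $\mathrm{II}_1$ factor, hence a tracial von Neumann algebra, hence a model of $T_0$. What remains is to check that every model of $T_0$ embeds into a model of $\Th(\R)$; I would establish this by showing that $\Th(\R)_\forall = T_0$. Since $T_0 = T_\forall$ and $\R \models T$, the inclusion $T_0 \subseteq \Th(\R)_\forall$ is immediate. For the reverse inclusion, suppose a universal consequence $\phi = \forall \bar x\,\psi(\bar x)$ of $\Th(\R)$ fails in some tracial von Neumann algebra $A$; a witnessing tuple lies in a finitely generated $*$-subalgebra of $A$, whose weak closure is a separable tracial von Neumann algebra and hence embeds into a separable $\mathrm{II}_1$ factor, which by the CEP embeds into $\R^\u$. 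As $\neg\psi$ is quantifier-free and thus preserved under embeddings, $\R^\u \not\models \phi$, contradicting $\R \preceq \R^\u \models \phi$. Hence $\Th(\R)_\forall = T_0$, every model of $T_0$ is a substructure of (and so embeds into) a model of $\Th(\R)$, and $\Th(\R)$ is a model companion of $T_0$ --- the desired contradiction.

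The main obstacle is really just the bookkeeping in the previous paragraph: upgrading the CEP (usually phrased for separable $\mathrm{II}_1$ factors) to the statement that \emph{every} tracial von Neumann algebra embeds into a model of $\Th(\R)$, which is handled by the finitely-generated-subalgebra reduction together with the $\aleph_1$-saturation of $\R^\u$ and the independence of the choice of $\u$.

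An essentially equivalent finish, closer in spirit to Section 2, runs as follows: assuming the CEP, $\R$ is locally universal and $\R$ is McDuff, so by Corollary \ref{noqe} the theory $\Th(\R)$ does not have quantifier elimination; but a model-complete theory whose universal part has the amalgamation property does have quantifier elimination, and $\Th(\R)_\forall = T_0$ has amalgamation via amalgamated free products (as in \cite[Chapter 4]{BO}). Either way we contradict the conclusion just derived that $\Th(\R)$ is model-complete, completing the proof.
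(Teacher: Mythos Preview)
Your proof is correct and follows essentially the same line as the paper's: assume a model-complete theory of II$_1$ factors, use CEP together with Proposition~\ref{R} to identify it with $\Th(\R)$, then observe that under CEP one has $\Th(\R)_\forall = T_0$, making $\Th(\R)$ a model companion of $T_0$ and contradicting Theorem~\ref{nomodcomp}. The paper's proof asserts the equality $T_\forall = T_0$ in one line, whereas you spell out the reduction to separable algebras via a finitely generated witness; your alternative finish through Corollary~\ref{noqe} and amalgamation is also valid but is not the route the paper takes here.
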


\begin{proof}
Suppose that $T$ is a model-complete theory of II$_1$ factors.  By the positive solution to the CEP and Proposition \ref{R}, $T=\Th(\R)$.  Meanwhile, a positive solution to the CEP implies that $T_\forall=T_0$, whence $T$ is a model companion for $T_0$, contradicting Theorem \ref{nomodcomp}.
\end{proof}
%The \emph{Connes Embedding Problem (CEP)} asks whether every separable II$_1$ factor is $\R^\omega$-embeddable.  Thus, assuming a positive solution to the CEP, we see that $\R$ is the only possible model-complete II$_1$ factor.  We will soon see that (again assuming a positive solution to the CEP) $\R$ is not model-complete.

%
%Let $T_0$ be the theory of tracial von Neumann algebras in the signature $\la$.  $T_0$ is a universal theory; see \cite{FHS2}.  Let $T$ be the theory of II$_1$ factors, a $\forall \exists$-theory by \cite{FHS2}.  Moreover, since every tracial von Neumann algebra is contained in a II$_1$ factor, we see that $T_0=T_\forall$.  Thus, an existentially closed model of $T_0$ is a model of $T$.

%\begin{cor}\label{nomodcomp}
%Assume that the CEP has a positive solution.  Then $T_0$ does not have a model companion.  In particular, $\Th(\R)$ is not model-complete.
%\end{cor}

%\begin{proof}
%Suppose that $T^*$ is a model companion for $T_0$.  Let $M\models T$.  Since $M$ is existentially closed, we have that $M$ is a II$_1$ factor.  By Proposition \ref{R}, $M\equiv \R$, whence $T^*=\Th(\R)$.  On the other hand, $T_0$ has the amalgamation property (see \cite[Chapter 4]{BO}), so $T^*$ is the model-completion of $T_0$.  Since $T_0$ is univerally axiomatizable, it follows that $T^*$ has QE, contradicting Theorem \ref{noqe}.
%\end{proof}

%\begin{cor}
%Assume that the CEP has a positive solution.  Then the class of existentially closed models of $T$ is not elementary. 
%\end{cor}

\section{Concluding Remarks}

Theorem \ref{noqeR} presents a major hurdle in trying to understand the model theory of II$_1$ factors.  In particular, it places a major roadblock in trying to understand potential independence relations in theories of II$_1$ factors.  Indeed, although any II$_1$ factor is unstable (see \cite{FHS1}), one might wonder whether the natural notion of independence stemming from noncommutative probability theory might show that some II$_1$ factor is (real) rosy (see \cite{Adler} for the definition of rosy theory).  More precisely, fix some ``large'' II$_1$ factor $M$ and consider the relation $\ind$ on ``small'' subsets of $M$ given by $A\ind_C B$ if and only if, for all $a\in \langle AC\rangle$, $E_{\langle C\rangle}(a)=E_{\langle BC\rangle}(a)$.  Here, $\langle * \rangle$ denotes the von Neumann subalgebra generated by $*$ and $E_{\langle *\rangle}$ is the conditional expectation (or orthogonal projection) map $E_{\langle *\rangle}:L^2M\to L^2\langle *\rangle $.  In trying to verify some of the natural axioms for an independence relation (see \cite{Adler}), one runs into trouble when trying to verify the extension axiom:  If $B\subseteq C\subseteq D$ and $A\ind_B C$, can we find $A'$ realizing the same type as $A$ over $C$ such that $A'\ind_B D$?  If $M=\R^\u$ and ``small'' means ``countable,'' then it seems quite likely that one could find an $A'$ with the same \emph{quantifier-free type} as $A$ over $C$ that is independent from $D$ over $B$ as quantifier-free types are determined by moments.  Without quantifier-elimination, it seems quite difficult to prove the extension property for this purported notion of independence.  (The question of whether or not the independence relation arising from conditional expectation yields a strict independence relation was also discussed in \cite{Ben}.)

\end{document}